	\newcommand{\rf}[1]{\comment{Reference: \url{#1}}}
    \newtcbox{\feedback}{nobeforeafter,colframe=black,colback=white,boxrule=0.5pt,arc=2pt,
      boxsep=0pt,left=2pt,right=2pt,top=2pt,bottom=2pt,tcbox raise base}
    \newtheorem{thm}{Theorem}
    \newtheorem{prop}{Proposition}
    \newtheorem{lem}{Lemma}
\newcommand\blfootnote[1]{%
  \begingroup
  \renewcommand\thefootnote{}\footnote{#1}%
  \addtocounter{footnote}{-1}%
  \endgroup
}
	\renewcommand{\P}{\mathop{}\!\textnormal{P}}
	\newcommand{\E}{\mathop{}\!\textnormal{E}}
	\newcommand{\N}{\mathcal{N}}
	\newcommand{\0}{\mathbf{0}}
	\newcommand{\1}{\mathbf{1}}
	\newcommand{\I}{\mathbb{I}}
	\renewcommand{\O}{\mathbb{O}}
	\newcommand{\R}{\mathbb{R}}
	\newcommand{\OLS}{\textnormal{OLS}}  % Sendhil does not know how to spell this
\title{
\Large
Bias Reduction in Instrumental Variable Estimation through First-Stage Shrinkage}
\author{Jann Spiess}
\date{\textsc{Working Paper} \\ This version: October 31, 2017}
\begin{document}
    
    \maketitle
    
    \begin{abstract}
		The two-stage least-squares (2SLS) estimator is known to be biased when its first-stage fit is poor. I show that better first-stage prediction can alleviate this bias. In a two-stage linear regression model with Normal noise, I consider shrinkage in the estimation of the first-stage instrumental variable coefficients. For at least four instrumental variables and a single endogenous regressor, I establish that the standard 2SLS estimator is dominated with respect to bias. The dominating IV estimator applies James–Stein type shrinkage in a first-stage high-dimensional Normal-means problem followed by a control-function approach in the second stage. It preserves invariances of the structural instrumental variable equations.
    \end{abstract}

    \blfootnote{
        \hspace{-\baselineskip}
        Jann Spiess, Department of Economics, Harvard University, \href{mailto:jspiess@fas.harvard.edu}{\texttt{jspiess@fas.harvard.edu}}.
        I thank Gary Chamberlain, Maximilian Kasy, and Jim Stock for helpful comments.
    }%
    
    \section*{Introduction}

    The standard two-stage least-squares  (2SLS) estimator is known to be biased towards the OLS estimator when instruments are many or weak.
    In a linear instrumental variables model with one endogenous regressor, at least four instruments, and Normal noise,
    I propose an estimator that combines James--Stein shrinkage in a first stage with a second-stage control-function approach.
    Unlike other IV estimators based on James--Stein shrinkage, my estimator reduces bias uniformly relative to 2SLS. Unlike LIML, it is invariant with respect to the structural form and translation of the target parameter.
    
    I consider the first stage of a two-stage least-squares estimator as a high-dimensional prediction problem, to which I apply rotation-invariant shrinkage akin to \cite{James:1992jm}.
    Regressing the outcome on the resulting predicted values of the endogenous regressor directly would shrink the 2SLS estimator towards zero, which could increase or decrease bias depending on the true value of the target parameter.
    Conversely, shrinking the 2SLS estimator towards the OLS estimator can reduce risk \citep{hansen2017stein}, but increases bias towards OLS.
    Instead, my proposed estimator uses the first-stage residuals as controls in the second-stage regression of the outcome on the endogenous regressor.
    If no shrinkage is applied, the 2SLS estimator is obtained as a special case, while a variant of \cite{James:1992jm} shrinkage that never fully shrinks to zero uniformly reduces bias.

    The proposed estimator is invariant to a group of transformations that include translation in the target parameter.
    While the limited-information maximum likelihood estimator (LIML) can be motivated rigorously as an invariant Bayes solution to a decision problem \citep{Chamberlain:2007uz}, these transformations rotate the (appropriately re-parametrized) target parameter and invariance applies to a loss function that has a non-standard form in the original parametrization.
    In particular, unlike LIML, the invariance of my estimator applies to squared-error loss.
    
    The two-stage linear model is set up in Section~\ref{sect:setup}.
    Section~\ref{sect:iv} proposes the estimator and establishes bias improvement relative to 2SLS. Section~\ref{sect:invariance} develops invariance properties of the proposed estimator.

    \section{Two-Stage Linear Regression Setup}
    \label{sect:setup}
    
    I consider estimation of the structural parameter $\beta \in \R$ in
    the standard two-stage linear regression model
    \begin{align}
    \begin{aligned}
    \label{eqn:normal}
        Y_i &= \alpha + X_i' \beta + W_i' \gamma + U_i \\
        X_i &= \alpha_X + Z'_i \pi + W'_i \gamma_X + V_i
    \end{aligned}
    \end{align}
    from $n$ iid observations $(Y_i,X_i,Z_i,W_i)$, where $X_i \in \R$ is the regressor of interest (assumed univariate), $W_i \in \R^k$ control variables, $Z_i \in \R^\ell$ instrumental variables, and $(U_i,V_i)' \in \R^2$ is homoscedastic (wrt $Z_i$), Normal noise.
    $\alpha$ is an intercept,%
    \footnote{We could alternatively include a constant regressor in $X_i$ and subsume $\alpha$ in $\beta$. I choose to treat $\alpha$ separately since I will focus on the loss in estimating $\beta$, ignoring the performance in recovering the intercept $\alpha$.} 
    and $\gamma$ and $\pi$ are nuisance parameters.
    This model could be motivated by a latent variable present in both outcome and first-stage equation under appropriate exclusion restrictions as in \cite{Chamberlain:2007uz}.\footnote{In this section, the intercepts $\alpha, \alpha_X$ could be subsumed in the control coefficients $\gamma,\gamma_X$ without loss, but I maintain this notation to keep it consistent.}
    
    Throughout this document, I write upper-case letters for random variables (such as $Y_i$) and lower-case letters for fixed values (such as when I condition on $X_i = x_i$).
    When I suppress indices, I refer to the associated vector or matrix of observations, e.g. $Y \in \R^n$ is the vector of outcome variables $Y_i$ and $X \in \R^{n \times m}$ is the matrix with rows $X'_i$.
    
        For the noise I use the notation
    \begin{align*}
        \begin{pmatrix}
            U_i \\ V_i
        \end{pmatrix}
        | Z_i = z_i, W_i = w_i
        \sim \N\left(
            \0_2, \begin{pmatrix}
            \sigma^2 & \rho \sigma \tau \\ 
            \rho \sigma \tau  & \tau^2
        \end{pmatrix}
        \right)
    \end{align*}
    for some $\rho \in (-1,1)$.
    The reduced form is
    \begin{align*}
        \begin{pmatrix}
            Y \\
            X
        \end{pmatrix}
        |Z = z, W{=}w
        \sim
        \N
        \left(
            \begin{pmatrix}
                \alpha + z \pi_Y + w \gamma_Y  \\
                \alpha_X + z \pi + w \gamma_X
            \end{pmatrix},
            \Sigma \otimes \I_{2n}
        \right)
    \end{align*}
    with
    \begin{align*}
        &
        \begin{matrix}
            \pi_Y = \pi \beta, \\
            \gamma_Y = \gamma + \gamma_X \beta,
        \end{matrix}
        &
        \Sigma 
        &=  \begin{pmatrix}
                 \sigma^2 + 2 \rho \beta \sigma \tau + \beta^2 \tau^2 & \rho \sigma \tau + \beta \tau^2 \\
                 \rho \sigma \tau + \beta \tau^2 & \tau^2
            \end{pmatrix}.
    \end{align*}
    Note that there is a one-two-one mapping between reduced-form and structural-form parameters provided that the proportionality restriction $\pi_Y = \pi \beta$ holds.
    I develop a natural many-means form directly from the structural model, which is thus without loss, but not without consequence.
    Throughout, our interest will be in estimating $\beta$ for many instruments (large $\ell$).

    We have
    $
        U_i | X_i=x_i,Z_i=z_i,W_i=w_i
        % \sim
        % U_i | V_i=x_i - \alpha_X - z_i'\pi  - w_i'\gamma_X \\
        \sim \N\left(\frac{\rho \sigma}{\tau} v_i,(1-\rho^2) \sigma^2\right)
    $
    where $v_i = x_i - \alpha_X - z_i'\pi  - w_i'\gamma_X$.
    Given $w \in \R^{n \times k}$ and $z \in \R^{n \times \ell}$,
    where I assume that $(\1,w,z)$ has full rank $1 + k + \ell \leq n - 1$,
    let $q = (q_{\1},q_w,q_z,q_r) \in R^{n \times n}$ orthonormal
    where $q_{\1} \in \R^n, q_w \in \R^{n \times k}, q_z \in \R^{n \times \ell}$
    such that $\1$ is in the linear subspace of $\R^n$ spanned by $q_{\1} \in \R^{n}$
    (that is, $q_{\1} \in \{\1/n,-\1/n\}$),
    the columns of $(\1,w)$ are in the space spanned by the columns of $(q_{\1},q_w)$,
    and the columns of $(\1,w,z)$ are in the space spanned by the columns of $(q_{\1},q_w,q_z)$.
    (As above, such a basis exists, for example, by an iterated singular value decomposition.)
    Then,
    \begin{align*}
        \begin{pmatrix}
            q'_z X \\
            q'_r X
        \end{pmatrix}
        | Z{=}z,W{=}w
        &\sim \N\left(
            \begin{pmatrix}
            q'_z z \pi \\
            \0_{n - 1 - k - \ell}
            \end{pmatrix},
            \tau^2 \I_{n^*}
        \right)
        \\
        \begin{pmatrix}
            q'_z Y \\
            q'_r Y
        \end{pmatrix}
        | X{=}x,Z{=}z,W{=}w
        &\sim \N\left(
            \begin{pmatrix}
                q'_z x \\
                q'_r x
            \end{pmatrix}
            \beta
            +
            \begin{pmatrix}
            q'_z x {-} q'_z z \pi \\
            q'_r x
            \end{pmatrix}
            \frac{\rho \sigma}{\tau},
            (1 {-} \rho^2) \sigma^2 \I_{n^*}
        \right),
    \end{align*}
    where $n^* = n - 1 - k$.
    Writing $X^*_z, X^*_r,Y^*_z, Y^*_r$ for the respective subvectors,
    \begin{align*}
        X^* &= \begin{pmatrix}
            X^*_z \\
            X^*_r
        \end{pmatrix},
        &
        Y^* &= \begin{pmatrix}
            Y^*_z \\
            Y^*_r
        \end{pmatrix},
    \end{align*}
    $\mu = q'_z z \pi$, and $s= n - 1 - k - \ell$,
    we arrive at the canonical structural form
    \begin{align}
    \label{eqn:caniv}
    \begin{split}
        X^*
        &\sim \N\left(
            \begin{pmatrix}
            \mu \\
            \0_{s}
            \end{pmatrix},
            \tau^2 \I_{\ell + s}
        \right)
        \\
        Y^*
        |
        X^* {=} x^*
        &\sim \N\left(
            x^*
            \beta
            +
            \left(x^* - 
            \begin{pmatrix}
                \mu\\
                \0_s
            \end{pmatrix}
            \right)
            \frac{\rho \sigma}{\tau},
            (1 - \rho^2) \sigma^2 \I_{\ell + s}
        \right),
    \end{split}
    \end{align}
    where I have suppressed conditioning on $Z{=}z,W{=}w$ (and omit it from here on).

    \section{Control-Function Shrinkage Estimator}
    \label{sect:iv}

    Given an estimator $\hat{\mu} = \hat{\mu}(X^*)$ of $\mu$,
    a feasible implied estimator for $\beta$ in \autoref{eqn:caniv} is the coefficient 
    on $X^*$ in a linear regression of $Y^*$ on $X^*$ and the control function
    $X^* - (\hat{\mu}',\0'_s)'$.
    (The two-stage least-squares estimator $\hat{\beta}^{\textnormal{2SLS}} = \frac{(Y^*_z)'X^*_z}{(X^*_z)'X^*_z}$
    is obtained from the first-stage OLS solution $\hat{\mu}^{\OLS} = X^*_z$.
    It is biased towards the OLS estimator $\hat{\beta}^{\OLS} = \frac{(Y^*)'X^*}{(X^*)'X^*}$.)

    For high-dimensional $\mu$,
    a natural estimator for $\mu$ is a shrinkage estimator of the form
    $
        \hat{\mu}(X^*) = c(X^*) X^*_z
    $
    with scalar $c(X^*)$.
    The conditional bias of the implied control-function estimator $\hat{\beta}$ takes a particularly simple form for this class of estimators:
    
    \begin{lem}[Conditional bias of CF--shrinkage estimators]
    \label{lem:bias}
        For $x^* \in \R^{\ell + s}$ with $c(x^*) \neq 0$,
        \begin{align*}
            \E[\hat{\beta} | X^*=x^*] - \beta
            &= \E\left[\frac{\hat{\mu}'(\hat{\mu} - \mu)}{\hat{\mu}'\hat{\mu}}\middle| X^*=x^*\right] \frac{\rho \sigma}{\tau} \\
            &= \left(1 - \frac{1}{c(x^*)} \frac{(x_z^*)'\mu}{(x_z^*)' x_z^*}\right) \frac{\rho \sigma}{\tau}.
        \end{align*}    
    \end{lem}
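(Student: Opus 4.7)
The plan is to exploit that, conditional on $X^*=x^*$, the estimator $\hat\beta$ is a deterministic linear function of $Y^*$, reducing the bias computation to an algebraic OLS manipulation. First I condition on $X^* = x^*$ and introduce the embedded vectors $\hat M = (\hat\mu', \0'_s)'$ and $M = (\mu', \0'_s)'$, so that the realized control function is $C = x^* - \hat M$ and both $\hat M, C$ are deterministic. From \autoref{eqn:caniv},
\begin{align*}
    \E[Y^* \mid X^* = x^*] = x^* \beta + (x^* - M)\tfrac{\rho\sigma}{\tau} = x^* \beta + C\,\tfrac{\rho\sigma}{\tau} + (\hat M - M)\,\tfrac{\rho\sigma}{\tau}.
\end{align*}
Since OLS of $Y^*$ on $(X^*, C)$ recovers the coefficients of $X^*$ and $C$ exactly from the first two terms on the right-hand side, the bias equals $\rho\sigma/\tau$ times the OLS coefficient on $X^*$ obtained by regressing $(\hat M - M)$ on $(X^*, C)$. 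Because $X^* = \hat M + C$, that coefficient coincides with the coefficient on $\hat M$ when $(\hat M - M)$ is regressed on $(\hat M, C)$.

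To evaluate this coefficient, I would orthogonalize: the Gram--Schmidt residual $C^\perp := C - (\hat M' C / \hat M' \hat M)\, \hat M$ has vanishing first-$\ell$ coordinates (there $\hat M$ and $C$ are both multiples of $x^*_z$, so the first-$\ell$ part cancels) and last-$s$ coordinates equal to $x^*_r$. Since $(\hat M - M)$ vanishes on the last $s$ coordinates, $(\hat M - M) \perp C^\perp$, and the two-regressor projection collapses to the univariate projection onto $\hat M$, with coefficient $\hat M'(\hat M - M)/\hat M'\hat M$. Because $\hat M$ and $M$ are both supported on the first $\ell$ coordinates, this equals $\hat\mu'(\hat\mu - \mu)/\hat\mu'\hat\mu$, which is the first equality; substituting $\hat\mu = c(x^*)\, x^*_z$ then yields the second.

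The main difficulty is less in calculation than in choosing the right basis: the raw $2\times 2$ normal equations for $\hat\beta$ on $(X^*, C)$ are unwieldy, but reparametrizing $X^* = \hat M + C$ and noticing that the Gram--Schmidt residual $C^\perp$ is supported on coordinates disjoint from the support of $(\hat M - M)$ eliminates the cross term, reducing the problem to a one-dimensional projection whose ratio already has the claimed form.
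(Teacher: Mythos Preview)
The paper states this lemma without proof, so there is no author's argument to compare against. Your proposal is correct and is exactly the kind of direct computation one would expect: conditioning on $X^*=x^*$ makes $\hat\beta$ a linear functional of $Y^*$, so the conditional bias is the OLS coefficient on $x^*$ when regressing $\E[Y^*\mid X^*=x^*]$ on $(x^*,C)$; the reparametrization $x^*=\hat M+C$ identifies this with the coefficient on $\hat M$ in the $(\hat M,C)$ regression; and the observation that the Gram--Schmidt residual $C^\perp=(\0_\ell',x_r^{*\prime})'$ is orthogonal to $\hat M-M$ forces the $C$-coefficient to vanish, collapsing the calculation to the univariate ratio $\hat M'(\hat M-M)/\hat M'\hat M$.

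Two small points worth making explicit in a final write-up. First, the change of regressors from $(x^*,C)$ to $(\hat M,C)$ preserves the coefficient on the first regressor precisely because $x^*=\hat M+C$ with the \emph{second} regressor unchanged; you use this, but it deserves one line since a generic change of basis would not preserve individual coefficients. Second, the argument implicitly needs the design $(\hat M,C)$ to have full column rank, which requires $x^*_z\neq 0$ and $x^*_r\neq 0$ in addition to $c(x^*)\neq 0$; these hold almost surely, so the omission is harmless, but it is worth a parenthetical remark.
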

    
    Shrinkage in the \cite{James:1992jm} estimator (for unknown $\tau^2$) takes the form $c(x^*) = 1 - p \frac{\|x^*_r \|^2}{\| x^*_z \|^2}$.
    This shrinkage pattern (and its positive-part variant) is unappealing here, as it can cross zero, around which point the estimator diverges.
    A natural variant that mitigates this problem is
    \begin{align*}
        c(x^*) = \frac{1}{1 + p \frac{\|x^*_r \|^2}{\| x^*_z \|^2}}
        = \frac{\| x^*_z \|^2}{\| x^*_z \|^2 + p \|x^*_r \|^2},
    \end{align*}
    which behaves as $1 - p \frac{\|x^*_r \|^2}{\| x^*_z \|^2}$
    for small $p \frac{\|x^*_r \|^2}{\| x^*_z \|^2}$, but never quite reaches zero.
    
    \begin{thm}[Bias dominance through shrinkage]
        \label{thm:biasdom}
        Assume that $\ell \geq 4$ and $p \in \left(0,2 \frac{\ell - 2}{s}\right)$.
        Then
        $
            |\E[\hat{\beta} | Z{=}z,W{=}w] - \beta|
            < |\E[\hat{\beta}^{\textnormal{2SLS}} | Z{=}z,W{=}w] - \beta|
        $
        provided $\rho \neq 0$ and $\| \mu \| \neq 0$ (otherwise equality).
    \end{thm}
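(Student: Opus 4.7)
The plan is to use \autoref{lem:bias} to write the two unconditional biases as explicit scalar expectations over $X_z^* \sim \N(\mu, \tau^2 \I_\ell)$, then evaluate them with Stein's identity and verify the resulting algebraic inequality. Setting $c \equiv 1$ and $1/c(X^*) = 1 + p\|X_r^*\|^2/\|X_z^*\|^2$ respectively in \autoref{lem:bias}, the shrinkage estimator's conditional bias differs from the 2SLS conditional bias by $-p(\|X_r^*\|^2/\|X_z^*\|^2)((X_z^*)'\mu/\|X_z^*\|^2)\rho\sigma/\tau$. Taking unconditional expectations and using the independence of $X_z^*$ and $X_r^*$ in the canonical form \autoref{eqn:caniv} together with $\E\|X_r^*\|^2 = s\tau^2$ gives $\E\hat\beta - \E\hat\beta^{\textnormal{2SLS}} = -p\,s\,\tau\,\rho\sigma\,C$, where $C := \E[(X_z^*)'\mu/\|X_z^*\|^4]$.

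Next I would apply Stein's identity to $X_z^*$ with the vector fields $x/\|x\|^2$ (classical divergence $(\ell-2)/\|x\|^2$) and $x/\|x\|^4$. The first yields $\E\hat\beta^{\textnormal{2SLS}} - \beta = (\ell-2)\tau\rho\sigma\,\psi$ with $\psi := \E[1/\|X_z^*\|^2]$. For the second, $\ell \geq 5$ admits the classical identity $\psi - C = (\ell-4)\tau^2\,\E[1/\|X_z^*\|^4] \geq 0$; when $\ell = 4$, the classical divergence vanishes but $x/\|x\|^4$ has distributional divergence $2\pi^2\,\delta_0$ in $\R^4$ (it is, up to constant, the gradient of the fundamental solution of $-\Delta$ on $\R^4$), producing a strictly positive boundary contribution at the origin. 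In either case, $0 \leq C \leq \psi$.

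The main obstacle is the strict positivity $C > 0$ whenever $\|\mu\| > 0$. I would establish this uniformly over $\ell \geq 4$ by rotating $\mu$ onto the first coordinate axis and conditioning on $T := \|X_z^*\|^2 - (X_{z,1}^*)^2$, which is independent of $X_{z,1}^* \sim \N(\|\mu\|, \tau^2)$. The claim reduces to showing $\E[U/(U^2 + t)^2] > 0$ for every fixed $t > 0$ and $U \sim \N(\|\mu\|, \tau^2)$ with $\|\mu\| > 0$. This follows by splitting the density of $U$ into its even and odd parts around zero: the even part integrates against the odd integrand $u/(u^2+t)^2$ to zero, while the odd part of the density is strictly positive on $(0,\infty)$ (the density at $+u$ exceeds that at $-u$ when $\|\mu\| > 0$) and it is paired with the strictly positive integrand on $(0,\infty)$.

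Assembling the pieces, under $p \in (0, 2(\ell-2)/s)$ and $\|\mu\| > 0$ we have $0 < p\,s\,C \leq p\,s\,\psi < 2(\ell-2)\,\psi$, so $\E\hat\beta - \beta = \tau\rho\sigma\,[(\ell-2)\psi - p\,s\,C]$ satisfies $|\E\hat\beta - \beta| = |\tau\rho\sigma|\cdot|(\ell-2)\psi - p\,s\,C| < (\ell-2)|\tau\rho\sigma|\,\psi = |\E\hat\beta^{\textnormal{2SLS}} - \beta|$ when $\rho \neq 0$, which is the stated strict bias dominance. The boundary cases are trivial: $\rho = 0$ zeros both biases, and $\|\mu\| = 0$ zeros $C$, so the correction vanishes and the two biases coincide.
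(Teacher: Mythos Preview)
Your argument is correct and reaches the same conclusion, but the route to the key inequality differs from the paper's. Both proofs start identically: apply \autoref{lem:bias}, use independence of $X_z^*$ and $X_r^*$ together with $\E\|X_r^*\|^2 = s\tau^2$, and reduce the bias to $\rho\sigma\tau[(\ell-2)\psi - psC]$ with $\psi = \E\|X_z^*\|^{-2}$ and $C = \E[(X_z^*)'\mu/\|X_z^*\|^4]$. The crux is then the inequality $C \leq \psi$, which is algebraically equivalent to the paper's $\lambda^* \geq \ell-2$. The paper obtains this via the Poisson-mixture representation of the noncentral $\chi^2$ and a Jensen-type bound $\E[(a+K)^{-1}] \leq (a+\nu-1)^{-1}$ (a generalization of a Moser inequality), which handles all $\ell \geq 4$ in one stroke and also yields strict positivity of $C$ through strict convexity. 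You instead apply Stein's identity a second time to $g(x)=x/\|x\|^4$: for $\ell \geq 5$ this gives $\psi - C = (\ell-4)\tau^2\E\|X_z^*\|^{-4} \geq 0$ cleanly, while for $\ell = 4$ you invoke the distributional divergence $2\pi^2\delta_0$ (correct, since $x/\|x\|^4$ is proportional to $\nabla$ of the Newtonian potential in $\R^4$), which really amounts to a boundary term in the integration by parts on $\R^4\setminus B_\epsilon(0)$ and yields $\psi - C = 2\pi^2\tau^2 f_{X_z^*}(0) > 0$. Your separate odd/even density argument for $C > 0$ when $\|\mu\|>0$ is also valid and avoids the Poisson machinery.

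In short: your approach is more elementary for $\ell \geq 5$ (no special-function identities), but needs a case split and a careful boundary/distributional justification at $\ell = 4$ that you only sketch; the paper's Poisson route is uniform in $\ell \geq 4$ and delivers the strict inequality and the positivity of $C$ from the same Jensen step.
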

    
    The requirement $\ell \geq 4$ is an artifact of this specific shrinkage pattern and dominance should extend to $\ell = 3$ for an appropriate modification. 
    
    \begin{proof}
        For the (rescaled) bias, where $\lambda = p s$ and $M = X^*_z$, we have by \autoref{lem:bias} that
        \begin{align*}
            B(\lambda)
            &= 
            \frac{\tau}{\rho \sigma} \E[\hat{\beta} - \beta] 
            = \E\left[1 - \frac{\| X^*_z \|^2 + p \|X^*_r \|^2}{\| X^*_z \|^2} \frac{(X_z^*)'\mu}{(X_z^*)' X_z^*}\right] \\
            &= \E\left[  1 - \frac{\| X^*_z \|^2 + p \E\left[\|X^*_r \|^2 \middle|X^*_z \right]}{\| X^*_z \|^2} \frac{(X_z^*)'\mu}{(X_z^*)' X_z^*}\right] \\
            &=
            \E\left[
                1 - \frac{M' \mu}{\|M\|^2} - \lambda \tau^2 \frac{M' \mu}{\|M\|^4}
            \right],
        \end{align*}
        provided that $\E\left|
                1 - \frac{M' \mu}{\|M\|^2} - \lambda \tau^2 \frac{M' \mu}{\|M\|^4}
            \right| < \infty$.
        By the multi-dimensional version of \citeauthor{Stein:1981ie}'s (\citeyear{Stein:1981ie}) lemma for $h(M) = \frac{1}{\|M\|^2}$,
        \begin{align*}
            - 2 \tau^2 \E\left[ \frac{M}{\|M\|^4} \right]
            = 
            \tau^2 \E\left[ \nabla h(M) \right]
            =
            \E\left[ (M - \mu) h(M) \right]
            =
            \E\left[ \frac{M - \mu}{\|M\|^2} \right],
        \end{align*}
        again provided that all moments exist.

        For the existence of moments, 
        note that by Cauchy--Schwarz and Jensen it suffices to consider $\E\left\| M / \|M\|^4 \right\| = \E[\|M\|^{-3}]$.
        To establish that this expectation is finite,
        note that the distribution of $\|M\|^2 / \tau^2$, a non-central $\chi^2$ distribution with $\ell$ degrees of freedom and non-centrality parameter $\|\mu\|^2/\tau^2$, is first-order stochastically dominating a central $\chi^2$ distribution with $\ell$ degrees of freedom, so it is sufficient to establish $\E[(X^2)^{-3/2}] < \infty$ where $X^2$ has a central $\chi^2$ distribution with $\ell$ degrees of freedom.
        Now, the density $f(y)$ of $(X^2)^{3/2}$ is proportional to $y^{\ell/3 - 1} \exp(-y^{2/3}/2)$,
        implying $\lim_{y \searrow 0} f(y) / y^\alpha = 0$ for $\ell \geq 4$ and, say, $\alpha = 1/4 > 0$.
        The existence of the inverse moment, i.e. $E[(X^2)^{-3/2}] < \infty$, follows by \cite{Piegorsch:1985ge}.

        We thus have
        \begin{align*}
            \E\left[ \frac{M' \mu}{\|M\|^4} \right]
            =
            \frac{-1}{2 \tau^2}
            \E\left[ \frac{(M - \mu)' \mu}{\|M\|^2} \right],
        \end{align*}
        which yields
        \begin{align*}
            B(\lambda)
           &=
            \E\left[
                1 - \frac{M' \mu}{\|M\|^2} + \frac{\lambda}{2} \frac{(M - \mu)' \mu}{\|M\|^2}
            \right]
            \\
            &=
            \E\left[
                1 - \frac{\|M\|^2 - (M - \mu)' M}{\|M\|^2} + \frac{\lambda}{2} \frac{\|M^2\| - \|\mu\|^2 - (M - \mu)' M
                }{\|M\|^2}
            \right]
            \\
            &=
            \frac{\lambda}{2}
            - \frac{\lambda}{2}
            \E\left[\frac{\|\mu\|^2}{\| M \|^2}\right]
            - \frac{\lambda - 2}{2}
            \E\left[\frac{(M - \mu)' M}{\|M \|^2}\right].
        \end{align*}
        
        Denote by $K$ a Poisson random variable with mean $\kappa = \frac{\| \mu \|^2}{2 \tau^2} > 0$. ($B(\lambda)$ is constant at $1$ for $\| \mu \| = 0$, and there remains nothing to show.)
        From  \citet[(9), (16)]{James:1992jm} we have that
        \begin{align*}
            \E\left[\frac{\|\mu\|^2}{\| M \|^2}\right]
            &=  \E\left[\frac{2 \kappa}{\ell - 2 + 2 K}\right] = Q(\ell),
            \\
            \E\left[\frac{(M - \mu)' M}{\|M \|^2}\right]
            &=  \E\left[\frac{\ell-2}{\ell - 2 + 2 K}\right] = P(\ell).
        \end{align*}
        It immediately follows from
        \begin{align*}
            B(\lambda) = P(\ell) - \frac{\lambda}{2} (P(\ell) + Q(\ell) - 1)
        \end{align*}
        that the bias for the unshrunk reference estimator ($\lambda = 0$, 2SLS) is
        $B(0) = P(\ell) > 0$,
        and that $B(\lambda)$ is decreasing in $\lambda$ since $P(\ell) + Q(\ell) \geq 1$ by Jensen's inequality (with strict inequality unless $\| \mu \| = 0$).
        The (infeasible) bias-minimizing choice of $\lambda$ is given by
        \begin{align*}
            \lambda^*
            =
            \frac{2 P(\ell)}{P(\ell) + Q(\ell) - 1}
            = \frac{\ell-2}{\frac{\ell-2}{2} + \kappa - 1 / \E\left[(\frac{\ell-2}{2} + K)^{-1}\right]}.
        \end{align*}
        To conclude the proof, I assert (and prove below) that, for any $a \geq 1$,
        \begin{align}
            \label{eqn:poissoninequality}
            \E\left[(a + K)^{-1}\right] \leq \frac{1}{a + \nu - 1}.
        \end{align}
        With $a = \frac{\ell - 2}{2}$ it follows that $\frac{\ell-2}{2} + \kappa - 1 / \E\left[(\frac{\ell-2}{2} + K)^{-1}\right] \leq 1$ and thus $\lambda^* \geq \ell - 2$.
        We obtain $|B(\lambda)| \leq |B(0)|$ (dominance over 2SLS in terms of bias) for all $\lambda \in (0,\ell - 2)$ by strict monotonicity of $B(\lambda)$, which yields the theorem.

        To establish \autoref{eqn:poissoninequality},
        fix $a \in \R$ with $a \geq 1$
        and note that for $K$ Poisson with parameter $\nu$
        \begin{align*}
            \E\left[\frac{\nu}{a + K}\right]
            &=
            \sum_{\iota = 0}^\infty \frac{\nu}{a + \iota} \frac{\nu^{\iota} \exp(-\nu)}{\iota!} 
            = \sum_{\iota = 0}^\infty \frac{\iota + 1}{a + \iota} \frac{\nu^{\iota + 1} \exp(-\nu)}{(\iota + 1)!} \\
            &= \sum_{\iota = 1}^\infty \frac{\iota}{a + \iota - 1} \frac{\nu^{\iota} \exp(-\nu)}{\iota!}.
        \end{align*}
        For $a = 1$, thus $\E\left[\frac{\nu}{a + K}\right] = 1 - \exp(-\nu) \leq 1$.
        For $a > 1$, 
        \begin{align*}
            \E\left[\frac{\nu}{a + K}\right]
            &= \sum_{\iota = 0}^\infty \frac{\iota}{a + \iota - 1} \frac{\nu^{\iota} \exp(-\nu)}{\iota!}
            = \E\left[\frac{K}{a + K - 1}\right]
            \leq \frac{\nu}{a + \nu - 1}
        \end{align*}
        by Jensen's inequality applied to the concave function $x \mapsto \frac{x}{a - 1 + x} (x \geq 0)$.
        In both cases, \autoref{eqn:poissoninequality} follows by dividing by $\nu$,
        yielding a generalization of an inequality in \citet[Theorem~6]{moser2008expectations} to non-integer $a$.
        \end{proof}

    \section{Invariance Properties}
    \label{sect:invariance}
        
        The estimator $\hat{\beta}$ developed in the previous section has invariance properties in a decision problem, where in spirit and notation I follow the treatment of LIML in \cite{Chamberlain:2007uz}.

        First I fix the sample and action spaces, as well as a class of loss functions, for the decision problem of estimating $\beta$. 
        Starting with \autoref{eqn:caniv},
        I write $\mathcal{Z} = (\R^{\ell + s})^2$ for the sample space from which $(X^*,Y^*)$ is drawn according to $\P_\theta$,
        where I parametrize $\theta = (\beta,\mu,\rho,\sigma,\tau) \in \Theta = \R \times \R^\ell \times \R^3_{\geq 0}$.
        The action space is $\mathcal{A}= \R$, from which an estimate of $\beta$ is chosen.
        I assume that the loss function $L: \Theta \times \mathcal{A} \rightarrow \R$ can be written as $L(\theta,a) = \ell(a - \beta)$ for some sufficiently well-behaved $\ell: \R \rightarrow \R$ (such as squared-error loss $L(\theta,a) = (a - \theta)^2$).
        The estimator $\hat{\beta}: \mathcal{Z} \rightarrow \mathcal{A}$ from the previous section is a feasible decision rule in this decision problem.
        
        For an element $g = (g_\beta,g_z,g_r)$ in the (product) group $G = \R \times O(\ell) \times O(s)$, where $\R$ denotes the group of real numbers with addition (neutral element $0$) and $O(\ell)$ the group of ortho-normal matrices in $\R^{\ell \times \ell}$ with matrix multiplication (neutral element $\I_\ell$),
        consider the following set of transformations (which are actions of $G$ on $\mathcal{Z}, \Theta, \mathcal{A}$):
        \begin{itemize}
            \item Sample space: $m_\mathcal{Z}: G \times \mathcal{Z} \rightarrow \mathcal{Z}$,
                            \begin{align*}
                                (g,(x^*,y^*))
                                \mapsto
                                \left(
                                    \begin{pmatrix}
                                        g_z & \O \\
                                        \O & g_r
                                    \end{pmatrix}
                                    x^*,
                                    \begin{pmatrix}
                                        g_z & \O \\
                                        \O & g_r
                                    \end{pmatrix}
                                    (y^*
                                    +
                                    g_\beta
                                    x^*)
                                \right)
                            \end{align*}
            \item Parameter space: $m_\Theta: G \times \Theta \rightarrow \Theta$,
                    \begin{align*}
                        (g,\theta) \mapsto
                        (\beta + g_\beta,g_z \mu,\rho,\sigma,\tau)
                    \end{align*}
            \item Action space: $m_\mathcal{A}: G \times \mathcal{A} \rightarrow \mathcal{A}, (g,a) \mapsto a+g_\beta$
        \end{itemize}

        These transformations are tied together by leaving model and loss invariant. Indeed, the following result is immediate from \autoref{eqn:caniv}:
        \begin{prop}[Invariance of model and loss]
            \leavevmode
            \begin{enumerate}
                \item The model is invariant: $m_\mathcal{Z}(g,(X^*,Y^*)) \sim \P_{m_\Theta(g,\theta)}$ for all $g \in G$.
                \item The loss is invariant: $L(m_\Theta(g,\theta),m_\mathcal{A}(g,a)) = L(\theta,a)$ for all $g \in G$.
            \end{enumerate}
        \end{prop}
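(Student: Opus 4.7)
The plan is to verify both invariance claims as direct consequences of the canonical form \autoref{eqn:caniv}, handling one component of $g = (g_\beta, g_z, g_r)$ at a time.

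For part~(i), I would introduce the block-diagonal orthogonal matrix $G_{zr} = \bigl(\begin{smallmatrix} g_z & \O \\ \O & g_r \end{smallmatrix}\bigr) \in O(\ell + s)$ and write the transformed data as $\tilde{X}^* = G_{zr} X^*$, $\tilde{Y}^* = G_{zr}(Y^* + g_\beta X^*)$. Because $G_{zr}$ is orthogonal and the covariance of $X^*$ under \autoref{eqn:caniv} is a scalar multiple of the identity, $\tilde{X}^*$ is Normal with covariance $\tau^2 G_{zr} G_{zr}' = \tau^2 \I_{\ell + s}$ and mean $G_{zr} (\mu', \0_s')' = ((g_z \mu)', \0_s')'$, matching the marginal implied by $m_\Theta(g, \theta) = (\beta + g_\beta, g_z \mu, \rho, \sigma, \tau)$.

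For the conditional law of $\tilde{Y}^*$ given $\tilde{X}^* = \tilde{x}^*$, I would substitute the conditional mean from \autoref{eqn:caniv} into the definition of $\tilde{Y}^*$ and use $G_{zr} X^* = \tilde{X}^*$. Letting $\xi \sim \N(\0_{\ell + s}, (1 - \rho^2)\sigma^2 \I_{\ell + s})$ denote the second-stage noise, the substitution yields $\tilde{Y}^* = G_{zr}\bigl[x^*\beta + (x^* - (\mu', \0_s')')\,\rho\sigma/\tau + \xi\bigr] + g_\beta \tilde{x}^*$. Collecting the coefficient of $\tilde{x}^* = G_{zr} x^*$ gives $\beta + g_\beta$, the offset of the control-function term becomes $G_{zr}(\mu', \0_s')' = ((g_z\mu)', \0_s')'$, and the rotated noise $G_{zr}\xi$ is again Normal with covariance $(1 - \rho^2)\sigma^2 \I_{\ell + s}$ by orthogonality. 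This reproduces \autoref{eqn:caniv} at $m_\Theta(g, \theta)$ and establishes part~(i).

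Part~(ii) is immediate from the additive-in-$\beta$ structure of the loss: $L(m_\Theta(g, \theta), m_\mathcal{A}(g, a)) = \ell\bigl((a + g_\beta) - (\beta + g_\beta)\bigr) = \ell(a - \beta) = L(\theta, a)$, with $g_z$ and $g_r$ playing no role since $L$ depends on $\theta$ only through $\beta$. No substantive obstacle arises; the only point requiring care is that the addition of $g_\beta X^*$ to $Y^*$ inside $m_\mathcal{Z}$ is precisely what absorbs the reparametrization $\beta \mapsto \beta + g_\beta$ of the conditional mean, so that after rotating by $G_{zr}$ the control-function term retains its canonical form with mean $g_z \mu$ and the residual noise keeps its isotropic scalar covariance.
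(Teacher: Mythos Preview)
Your proposal is correct and simply spells out the direct verification from \autoref{eqn:caniv} that the paper declares ``immediate'' without further argument. The marginal of $\tilde{X}^*$, the conditional of $\tilde{Y}^*$, and the additive cancellation in the loss are exactly as you describe, so there is nothing to add.
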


        A decision rule $d: \mathcal{Z} \rightarrow \mathcal{A}$ is invariant if, for all $(g,(x^*,y^*)) \in G \times \mathcal{Z}$,
            $d(m_\mathcal{Z}(g,(x^*,y^*))) = m_\mathcal{A}(g,d((x^*,y^*)))$.
        The estimator $\hat{\beta}$ above is included in a class of invariant decision rules:
        \begin{prop}[Invariance of a class of control-function estimators]
            Consider a control-function decision rule $d((x^*,y^*))$ obtained as the coefficient on $x^*$ in a linear regression of $y^*$ on $x^*$, controlling for $x^* - (c(\|x^*_z\|,\|x^*_r\|) (x^*_z)',\0')'$,
            where $c(\|x^*_z\|,\|x^*_r\|)$ scalar (and measurable).
            Then $d$ is an invariant decision rule with respect to the above actions of $G$.
        \end{prop}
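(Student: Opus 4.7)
The plan is to verify invariance by direct computation: pick an arbitrary $g=(g_\beta,g_z,g_r)\in G$ and show that $d(m_{\mathcal{Z}}(g,(x^*,y^*)))=d((x^*,y^*))+g_\beta = m_{\mathcal{A}}(g,d((x^*,y^*)))$. The key object is the block-diagonal matrix $R=\begin{pmatrix}g_z & \O \\ \O & g_r\end{pmatrix}\in\R^{(\ell+s)\times(\ell+s)}$, which is orthogonal because $g_z\in O(\ell)$ and $g_r\in O(s)$. With this notation, the transformed data read $\tilde x^*=R x^*$ and $\tilde y^*=R y^* + g_\beta R x^* = R y^* + g_\beta \tilde x^*$, and the coordinate blocks satisfy $\tilde x^*_z=g_z x^*_z$ and $\tilde x^*_r=g_r x^*_r$.

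Next I would record two simple consequences of orthogonality. First, the scalar $c$ is invariant, since $\|\tilde x^*_z\|=\|g_z x^*_z\|=\|x^*_z\|$ and analogously $\|\tilde x^*_r\|=\|x^*_r\|$; call the common value $c$. Second, the control regressor $W(x^*)=x^*-(c(x^*_z)',\0')'$ is equivariant: using the block structure,
\begin{align*}
W(\tilde x^*) = \tilde x^* - \begin{pmatrix} c\,\tilde x^*_z \\ \0 \end{pmatrix}
= R x^* - R \begin{pmatrix} c\, x^*_z \\ \0 \end{pmatrix}
= R\,W(x^*).
\end{align*}
Write $W=W(x^*)$ and $\tilde W = R W$ for brevity.

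Then I would express $d((x^*,y^*))$ via the Frisch--Waugh formula as
\begin{align*}
\hat\beta \;=\; \frac{(x^*)' M_W\, y^*}{(x^*)' M_W\, x^*},
\end{align*}
where $M_W=\I-W(W'W)^{-1}W'$ is the projector orthogonal to $W$ (in the measure-zero degenerate case where $W=\0$ or $M_W x^*=\0$ the statement is vacuous or follows by the same argument without partialling). Because $R$ is orthogonal, $M_{\tilde W}=R M_W R'$. Substituting this together with $\tilde x^*=R x^*$ and $\tilde y^*=R y^*+g_\beta R x^*$ into the formula yields
\begin{align*}
d((\tilde x^*,\tilde y^*))
&=\frac{(R x^*)' R M_W R'\,(R y^* + g_\beta R x^*)}{(R x^*)' R M_W R'\,(R x^*)}\\
&=\frac{(x^*)' M_W y^* + g_\beta (x^*)' M_W x^*}{(x^*)' M_W x^*}
= \hat\beta + g_\beta,
\end{align*}
which is exactly $m_{\mathcal{A}}(g,d((x^*,y^*)))$.

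I do not expect any conceptual obstacle: the proof is essentially bookkeeping. The one point that requires care, and is really the crux, is that the block-diagonal form of $R$ intertwines correctly with the selection of the $z$-coordinates inside the control function; this is what allows $\tilde W = R W$ and hence the cancellation $R'R=\I$ after partialling. The invariance of $c$ under $R$ (due to $g_z,g_r$ being orthogonal rather than arbitrary invertible matrices) and the linearity of the shift $g_\beta x^*$ together take care of the rest.
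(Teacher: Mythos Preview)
Your proof is correct and follows essentially the same route as the paper: both introduce the block-orthogonal matrix $R=\begin{pmatrix} g_z & \O \\ \O & g_r \end{pmatrix}$, observe that $c$ is invariant so the control regressor is $R$-equivariant, deduce that the residual projector conjugates as $R M_W R'$, and then read off $d\mapsto d+g_\beta$ from the Frisch--Waugh representation. The only cosmetic difference is notation (the paper writes the control as $b(x)=((1-c)x_z,\,x_r)'$ and the projector as $a(x)$), so there is nothing substantive to add.
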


        \begin{proof}
            Fix $(x,y) \in \mathcal{Z}$ and consider $d((x,y))$.
            Note first that $c = c(\|x_z\|,\|x_r\|)$ is invariant to the action of $G$ on $\mathcal{Z}$.
            The decision rule is
            \begin{align*}
                d((x,y))
                = \frac{x' a(x) y}{x' a(x) x}
            \end{align*}
            where
            \begin{align*}
                a(x) = \I - b(x) (b(x)'b(x))^{-1} b(x)'
                \text{ for }
                b(x) =
                \begin{pmatrix}
                    (1-c) x_z \\ x_r
                \end{pmatrix}.
            \end{align*}
            Now for any $g \in G$, where I write $q_g = \begin{pmatrix} g_z & \O \\ \O & g_r \end{pmatrix}$, we have $b( q_g x) = q_g b(x)$ and thus $a(q_g x) = q_g a(x) q_g'$.
            It is immediate that
            \begin{align*}
                d(m_\mathcal{Z}(g,(x,y)))
                &= d((q_g x,q_g y + g_\beta q_g x))
                = \frac{x' a(x) y}{x' a(x) x} + g_\beta \frac{x' a(x) x}{x' a(x) x} \\
                &= d((x,y)) + g_\beta = m_\mathcal{A}(g,d((x,y))),
            \end{align*}
            as claimed.
        \end{proof}

    \section*{Conclusion}

    An application of James--Stein shrinkage to instrumental variables in a canonical structural transformation consistently reduces bias.
    The specific estimator is invariant to a group of transformations of the structural form that involves translation of the target parameter.
    
    In a companion paper \citep{JSC}, I show how analogous shrinkage in at least three control variables provides consistent loss improvement over the least-squares estimator without introducing bias, provided that treatment is assigned randomly.
    Together, these results suggests different roles of overfitting in instrumental variable and control coefficients, respectively: while overfitting to instrumental variables in the first stage of a two-stage least-squares procedure induces bias, overfitting to control variables induces variance.

    \bibliography{Bibliography}

\begin{thebibliography}{}

\bibitem[Chamberlain, 2007]{Chamberlain:2007uz}
Chamberlain, G. (2007).
\newblock {Decision Theory Applied to an Instrumental Variables Model}.
\newblock {\em Econometrica}, 75(3):609--652.

\bibitem[Hansen, 2017]{hansen2017stein}
Hansen, B.~E. (2017).
\newblock {Stein-like 2SLS estimator}.
\newblock {\em Econometric Reviews}, 36(6-9):840--852.

\bibitem[James and Stein, 1961]{James:1992jm}
James, W. and Stein, C. (1961).
\newblock {Estimation with quadratic loss}.
\newblock {\em Fourth Berkeley Symposium}.

\bibitem[Moser, 2008]{moser2008expectations}
Moser, S.~M. (2008).
\newblock {Expectations of a noncentral chi-square distribution with
  application to IID MIMO Gaussian fading}.
\newblock In {\em International Symposium on Information Theory and Its
  Applications}. IEEE.

\bibitem[Piegorsch and Casella, 1985]{Piegorsch:1985ge}
Piegorsch, W.~W. and Casella, G. (1985).
\newblock {The existence of the first negative moment}.
\newblock {\em The American Statistician}, 39(1):60.

\bibitem[Spiess, 2017]{JSC}
Spiess, J. (2017).
\newblock {Unbiased Shrinkage Estimation}.
\newblock {\em arXiv preprint arXiv:1708.06436}.

\bibitem[Stein, 1981]{Stein:1981ie}
Stein, C.~M. (1981).
\newblock {Estimation of the Mean of a Multivariate Normal Distribution}.
\newblock {\em The Annals of Statistics}, 9(6):1135--1151.

\end{thebibliography}

\end{document}